\newtheorem{theorem}{Theorem}[section]
\newtheorem{proposition}{Proposition}[section]
\newtheorem{lemma}{Lemma}[section]
\newtheorem{corollary}{Corollary}[section]
\newtheorem{example}{Example}[section]
\newcommand{\RR}{\mathbb{R}}
\newcommand{\Rp}{\RR_+}
\newcommand{\Rpnn}{\RR^{n\times n}_+}
\def\Sat{\operatorname{Sat}}
\def\red{\operatorname{red}}
\begin{document}



\title{
On the Markov Chain Tree Theorem in the Max Algebra}

\author{
Buket Benek Gursoy\thanks{Hamilton Institute, National University of Ireland,
Maynooth, Maynooth, Co. Kildare, Ireland (buket.benek@nuim.ie, oliver.mason@nuim.ie). Supported by the Irish 
Higher Educational Authority (HEA) PRTLI Network Mathematics Grant.
}
\and
Steve Kirkland\thanks{Hamilton Institute, National University of Ireland,
Maynooth, Maynooth, Co. Kildare, Ireland (stephen.kirkland@nuim.ie). 
Research supported in part by the Science Foundation Ireland under
Grant No. SFI/07/SK/I1216b.
}
\and 
Oliver Mason\footnotemark[1]
\and 
Serge{\u\i} Sergeev\thanks{University of Birmingham, School of Mathematics, UK, Edgbaston B15 2TT (sergiej@gmail.com). Supported by EPSRC grant RRAH15735, RFBR grant 12-01-00886 and joint
RFBR-CNRS 11-01-93106.
}}

\maketitle

\begin{abstract}
The Markov Chain Tree Theorem is extended to the max algebra and possible applications to ranking problems are discussed.

\noindent{\em Keywords:} Markov chains, Stochastic matrices, directed spanning trees, max algebra, Kleene star, ranking.

\noindent{\em  AMS classification} 60J10, 68R10, 15B51, 05C05, 15A80, 91B06, 91B12. 
\end{abstract}
\section{Introduction} \label{intro-sec}

The Matrix Tree Theorem for Markov chains (referred to as the Markov Chain Tree Theorem) is a well-known result that relates the stationary distribution of an irreducible Markov chain with the weights of directed spanning trees of its associated digraph. For a directed graph $D = (V, E)$ and $1 \leq i \leq n$, a spanning subgraph $T=(V, E)$ of $D$ is said to be an $i$-tree if the following conditions are satisfied:
\begin{itemize}
\item[(i)] for every $j \neq i$ in $\{1, \ldots , n\}$, there is exactly one outgoing edge $e \in E$ whose beginning node is $j$;
\item[(ii)] there is no edge $e \in E$ whose beginning node is $i$;
\item[(iii)] the subgraph $(V, E)$ contains no directed cycle.
\end{itemize}

We now recall the classical Markov Chain Tree Theorem.  $D(A)$ denotes the weighted directed graph associated with an irreducible matrix $A \in \Rpnn$; $D(A)$ consists of the nodes $\{1, \ldots, n\}$ with a directed edge $(i, j)$ from $i$ to $j$ of weight $a_{ij}$ if and only if $a_{ij} > 0$.  We say the edge $e=(i, j)$ is outgoing from $i$ and write $t(e) = i$.  Given an $i$-tree $T$ in $D(A)$, the \emph{weight} of $T$ is given by the product of the weights of the edges in $T$ and is denoted by $\pi(T,A)$ or just
by $\pi(T)$ when $A$ is clear from the context.  

For $1 \leq i \leq n$, define $\mathcal{T}_i$ to be the set of all $i$-trees of $D(A)$.  The classical Matrix Tree Theorem for Markov chains, also known as the  Fre\u{\i}dlin-Wentzell formula~\cite{FW84, Son99}, can be stated as follows.

\begin{theorem} 
\label{thm:matrixtreethm} Let $A \in \Rpnn$ be an irreducible (row) stochastic matrix.  Define $w \in \RR^n_+$ by 
$$w_i = \sum_{T \in \mathcal{T}_i} \pi(T).$$
Then $A^Tw = w$.  In particular, $\frac{w}{\sum_{i=1}^n w_i}$ is the unique stationary distribution of the Markov chain with transition matrix $A$.
\end{theorem}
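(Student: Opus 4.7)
The plan is to verify $(A^T w)_i = w_i$ componentwise via a weight-preserving bijection between two decompositions of the same family of spanning \emph{functional subgraphs} of $D(A)$. Call a subgraph $F$ of $D(A)$ functional if every vertex has out-degree exactly one in $F$; the structural fact I rely on is that such an $F$ contains a unique directed cycle $C(F)$, with the remaining edges forming in-trees rooted on $C(F)$. Write $\pi(F)$ for the product of the weights of all edges of $F$, and let $\mathcal{F}_i$ denote the set of functional subgraphs whose cycle passes through $i$.

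First, I would use row-stochasticity $\sum_k a_{ik}=1$ to rewrite
\[
  w_i \;=\; \sum_k a_{ik}\sum_{T\in\mathcal{T}_i}\pi(T) \;=\; \sum_{k}\sum_{T\in\mathcal{T}_i} a_{ik}\,\pi(T).
\]
Each summand indexes a pair $(T,e)$ with $T$ an $i$-tree and $e=(i,k)$ an edge leaving $i$; since $T$ has out-degree zero at $i$ and one elsewhere, $T\cup\{e\}$ is functional, and because following out-edges of $T$ from $k$ returns to $i$, the unique cycle of $T\cup\{e\}$ passes through $i$ with $e$ as its outgoing cycle edge at $i$. The map $(T,e)\mapsto T\cup\{e\}$ is therefore a weight-preserving bijection onto $\mathcal{F}_i$, inverted by deleting the unique cycle edge leaving $i$. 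Dually,
\[
  (A^T w)_i \;=\; \sum_j a_{ji}\sum_{T\in\mathcal{T}_j}\pi(T) \;=\; \sum_{j}\sum_{T\in\mathcal{T}_j} a_{ji}\,\pi(T),
\]
and the analogous assembly with $e'=(j,i)$ is a weight-preserving bijection onto $\mathcal{F}_i$, inverted by deleting the unique cycle edge entering $i$. Both sides therefore equal $\sum_{F\in\mathcal{F}_i}\pi(F)$, giving $A^T w=w$. For the ``in particular'' part, irreducibility forces $\mathcal{T}_i\neq\emptyset$ for every $i$, so $w>0$, and Perron--Frobenius yields uniqueness of the positive fixed vector of $A^T$ up to scaling.

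The main obstacle is the verification of the bijection itself: one has to confirm that (i) the added edge always lies on the unique cycle of the resulting functional subgraph, so no element of $\mathcal{F}_i$ is over-counted, and (ii) every $F\in\mathcal{F}_i$ has exactly one cycle edge leaving $i$ and exactly one entering it, so no $F$ is missed. Once these two facts are in hand the identity falls out as a clean double count; without them one easily mis-counts functional subgraphs in which the distinguished edge is not a cycle edge, or under-counts those whose cycle passes through $i$ without using a prescribed edge.
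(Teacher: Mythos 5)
Your argument is correct. The two weight-preserving bijections --- between pairs $(T,e)$ with $T\in\mathcal{T}_i$, $e=(i,k)$, respectively pairs $(T',e')$ with $T'\in\mathcal{T}_j$, $e'=(j,i)$, and the connected functional subgraphs whose unique cycle passes through $i$ --- are exactly the standard double count, and the two verification points you flag do go through: the added edge always closes the unique cycle because every vertex of an $i$-tree leads to $i$ by following out-edges, and a simple cycle through $i$ has exactly one edge leaving $i$ and one entering it. (One small caution: a functional subgraph in general has one cycle per weakly connected component, so $\mathcal{F}_i$ should be read as the \emph{connected} functional subgraphs with cycle through $i$; both of your maps do land in, and surject onto, that set, so nothing breaks.) There is, however, no in-paper proof to compare against: Theorem~\ref{thm:matrixtreethm} is quoted as the classical Fre\u{\i}dlin--Wentzell formula with references and is not proved in the paper. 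The closest internal argument is the proof of the max-algebraic analogue, Theorem~\ref{thm:maxtreethm}, which necessarily proceeds differently: since $\oplus=\max$ is idempotent, an exact count is neither available nor needed, and the authors instead establish the two inequalities $A^T\otimes w\le w$ and $A^T\otimes w\ge w$ separately, each by a single edge swap performed on a weight-\emph{maximizing} tree. Your composite move --- adjoin $(i,k)$, then delete the cycle edge $(j,i)$ entering $i$ --- is precisely the concatenation of the two swaps used there; the classical setting forces you to upgrade this local surgery to a full bijection because every tree, not just the optimal one, must be accounted for exactly once, which is what your functional-subgraph decomposition accomplishes.
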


This core result has appeared in a variety of different contexts~\cite{FW84, AT89, Bro89, Ald90, Wic09}. It was discovered by Shubert~\cite{Shu75} in connection with flow-graph methods, and independently by Kohler-Vollmerhaus~\cite{KV80} motivated by problems in biological modelling. For another reference which discusses its extension to general, not necessarily irreducible Markov chains, see Leighton-Rivest~\cite{LR83}.  

One of the primary contributions of this paper is to extend the Matrix Tree Theorem for Markov chains to the setting of the max algebra.  We show this in two ways; first, we prove a max-algebraic version of the Markov Tree Theorem directly; we then provide an alternative proof using dequantization.   We also describe some specific results in connection with the max-algebraic spectral theory. In keeping with Bapat~\cite{Bap98}, the max algebra consists of the non-negative real numbers equipped with the two operations $a \oplus b = \max(a, b)$ and $a \otimes b = ab$. These operations extend to nonnegative matrices and vectors in the standard way \cite{Bap98, Cun79, BCOQ92, But10}. 

The layout of the paper is as follows. In Section \ref{sec:result}, we obtain the Matrix Tree theorem in the max algebra, and we also give an alternative proof using dequantization. In Section \ref{sec:Kleene}, we show how to associate our main result with the max-algebraic spectral theory. In particular, we consider the connection with the Kleene star of an irreducible max-stochastic matrix. In Section \ref{sec:app}, we discuss the possibility of applying these results to decision making problems. Finally, in Section \ref{sec:conc}, we present our conclusions and future prospects.

\section{Markov Chain Tree Theorem}
\label{sec:result}
In this section, we first show that Theorem \ref{thm:matrixtreethm} extends to the max algebra. We then provide a second alternative proof of this result using dequantization.  

\subsection{Markov Chain Tree Theorem in Max Algebra}
\label{sub:maxalg}
In the main result below, we present a max-algebraic version of the Matrix Tree Theorem for Markov chains. 

Let us first recall standard observations on graphs and spanning trees.

\begin{lemma}
\label{l:tree}
Let $D$ be a digraph and $i$ be a node of $D$, to which every other node can be connected by a path. Then $D$ contains an $i$-tree.
\end{lemma}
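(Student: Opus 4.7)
The plan is to construct the required $i$-tree explicitly by selecting, for each non-root vertex, one outgoing edge that makes progress toward $i$, and then verify the three defining properties.

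First I would set up a distance function. By hypothesis, for every $j \in V$ there is a directed path in $D$ from $j$ to $i$, so the quantity
\[
d(j) = \text{length of a shortest directed path from } j \text{ to } i \text{ in } D
\]
is finite, with $d(i)=0$ and $d(j) \geq 1$ for $j \neq i$. For each $j \neq i$, the definition of $d$ guarantees the existence of at least one edge $(j,k) \in E$ with $d(k) = d(j)-1$; I would pick one such edge and call it $e_j$. Let $T = (V, E_T)$ with $E_T = \{e_j : j \neq i\}$.

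Next I would verify conditions (i)--(iii) of the definition of an $i$-tree. Condition (i) is immediate from the construction: each $j \neq i$ contributes exactly one edge $e_j$, whose tail is $j$. Condition (ii) is also immediate, since $i$ is excluded from the index set of the selected edges. For condition (iii), suppose for contradiction that $E_T$ contained a directed cycle $j_1 \to j_2 \to \cdots \to j_r \to j_1$. By the selection rule, each step strictly decreases $d$, so
\[
d(j_1) > d(j_2) > \cdots > d(j_r) > d(j_1),
\]
which is absurd. Hence $T$ is acyclic.

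The construction is essentially a reverse BFS rooted at $i$, and the only point that requires care is acyclicity; using the strictly decreasing distance function makes this step immediate, so I do not anticipate any serious obstacle. Once the three conditions are checked, $T$ is an $i$-tree of $D$, completing the proof.
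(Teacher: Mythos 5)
Your proof is correct and complete: the shortest-path (reverse BFS) selection gives exactly one outgoing edge per non-root node, none at $i$, and the strictly decreasing distance function rules out cycles, which verifies all three conditions in the paper's definition of an $i$-tree. The paper states this lemma without proof as a ``standard observation,'' so there is no argument to compare against, but yours is precisely the standard construction one would expect to see supplied.
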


\begin{corollary}
\label{c:tree}
If $A\in\RR^{n\times n}_+$ is irreducible then for each node $i\in\{1,\ldots,n\}$ there exists an $i$-tree in $D(A)$ with nonzero weight. 
\end{corollary}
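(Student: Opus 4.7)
The plan is to deduce Corollary \ref{c:tree} as an almost immediate consequence of Lemma \ref{l:tree}, combined with the definitions of irreducibility and of $D(A)$. The key observation is that irreducibility of $A \in \Rpnn$ is equivalent to strong connectivity of the associated weighted digraph $D(A)$: for every pair of nodes $j,k$ there is a directed path from $j$ to $k$ in $D(A)$, and every edge of $D(A)$ carries strictly positive weight by construction (since $(i,j)$ is an edge iff $a_{ij}>0$).

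First I would fix an arbitrary node $i \in \{1,\ldots,n\}$ and invoke strong connectivity of $D(A)$ to conclude that every other node $j \neq i$ can be connected to $i$ by a directed path in $D(A)$. This is exactly the hypothesis required by Lemma \ref{l:tree}, so applying the lemma yields the existence of an $i$-tree $T$ in $D(A)$.

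Next I would argue about the weight. By definition of $D(A)$, every edge $(j,k)$ of $T$ satisfies $a_{jk} > 0$. Since $\pi(T,A)$ is the product of the weights $a_{jk}$ over edges $(j,k) \in T$, and this product is a finite product of strictly positive numbers, we have $\pi(T,A) > 0$. This completes the proof.

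There is no real obstacle here; the corollary is essentially a translation of Lemma \ref{l:tree} into the setting of irreducible nonnegative matrices. The only thing to check carefully is the direction of the paths required by Lemma \ref{l:tree} (paths \emph{into} $i$) versus the convention used for irreducibility (paths in both directions), but strong connectivity supplies both directions automatically, so this is immediate.
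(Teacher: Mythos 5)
Your proposal is correct and follows exactly the route the paper intends: the paper presents the corollary as an immediate consequence of Lemma~\ref{l:tree} (leaving the proof implicit), using precisely the facts that irreducibility gives strong connectivity of $D(A)$ and that every edge of $D(A)$ has positive weight. Nothing is missing.
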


\begin{lemma}
\label{l:path} Let $D$ be a digraph, $i$ be a node of $D$ and $T$ be an $i$-tree. Then for each node $j \neq i$ of $D$, there exists a unique directed path from $j$ to $i$ in $T$.
\end{lemma}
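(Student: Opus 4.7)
The plan is to construct the desired path by iteratively following outgoing edges, starting at $j$, and to show that this construction both terminates at $i$ and is the only possible directed path from $j$ to $i$ in $T$.

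For existence, I would define a sequence of nodes $j_0, j_1, j_2, \ldots$ by setting $j_0 = j$ and, whenever $j_k \neq i$, letting $j_{k+1}$ be the unique node such that $(j_k, j_{k+1})$ is an edge of $T$; such a node exists and is unique by condition (i) in the definition of an $i$-tree. The construction stops only when some $j_m$ equals $i$, since by condition (ii) the node $i$ is the only node in $T$ with no outgoing edge. I would then argue that the process must indeed reach $i$ in finitely many steps: the node set is finite, so otherwise some node would have to be repeated, yielding $j_k = j_{\ell}$ with $k < \ell$, and then the edges $(j_k, j_{k+1}), \ldots, (j_{\ell-1}, j_{\ell})$ would form a directed cycle in $T$, contradicting condition (iii). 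Hence there exists a smallest index $m$ with $j_m = i$, and $j_0 \to j_1 \to \cdots \to j_m$ is a directed path from $j$ to $i$ in $T$.

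For uniqueness, I would observe that the above construction leaves no freedom: any directed path in $T$ starting at $j$ and ending at $i$ must begin with the unique outgoing edge from $j$, hence its second vertex must be $j_1$; continuing, its third vertex must be $j_2$, and so on. Thus any such path agrees with the path constructed above up to the first occurrence of $i$, and it cannot continue past $i$ by condition (ii). Therefore the path is unique.

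I do not anticipate any real obstacle here; the argument is essentially a bookkeeping exercise on the three defining properties of an $i$-tree. The only point requiring a little care is the termination step, where acyclicity (condition (iii)) is used to rule out the construction running forever by cycling back to a previously visited node.
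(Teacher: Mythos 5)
Your proof is correct: the iterative construction via the unique outgoing edges, with acyclicity ruling out non-termination and condition (i) forcing uniqueness, is exactly the standard argument. The paper itself states this lemma without proof as a "standard observation," so there is nothing to compare against beyond noting that your write-up supplies the expected details (including the point that the walk cannot revisit a node before reaching $i$, so it really is a path).
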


We now consider an irreducible matrix $A$ in $\RR^{n \times n}_+$ which is row stochastic in a max-algebraic sense.  Formally, we assume that for $1 \leq i \leq n$, $\max\limits_{1 \leq j \leq n} a_{ij} = 1$ or using max-algebraic notation
$$A\otimes \mathbf{1} = \mathbf{1}.$$
In a convenient abuse of notation, we refer to matrices satisfying the above condition as \textit{max-stochastic}.  Our main result shows that Theorem \ref{thm:matrixtreethm} extends in a natural way to the max-algebra.

\begin{theorem}
\label{thm:maxtreethm} Let $A \in \RR^{n \times n}_+$ be an irreducible max-stochastic matrix.  Define the vector $w$ by 
\begin{equation}
\label{eq:mtt2}
w_i = \bigoplus_{T \in \mathcal{T}_i} \pi(T), \;\; 1 \leq i\leq n.
\end{equation}
Then $$A^T \otimes w = w.$$
\end{theorem}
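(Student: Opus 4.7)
The plan is to verify both directions of the coordinate identity $w_i = \bigoplus_{j=1}^n a_{ji} w_j$ (into which $A^T\otimes w = w$ unpacks) via an ``edge-swap'' construction that converts a $j$-tree into an $i$-tree, or vice versa, at a cost that turns out to be a ratio of two edge weights. The max-stochastic hypothesis enters twice in complementary ways: every entry of $A$ is at most $1$, and in each row of $A$ some entry equals $1$.

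For the inequality $w_i \geq a_{ji} w_j$, the case $a_{ji}=0$ is trivial; otherwise I would pick a $j$-tree $T'$ with $\pi(T') = w_j$, which exists by Corollary \ref{c:tree}, and use Lemma \ref{l:path} to identify the first edge $(i,i_1)$ of the unique directed path from $i$ to $j$ in $T'$. Setting $T'' := (T' \setminus \{(i,i_1)\}) \cup \{(j,i)\}$, the degree conditions for an $i$-tree hold by inspection; acyclicity is the only subtle point and follows because any cycle in $T''$ would have to traverse the new edge $(j,i)$ together with a directed path from $i$ to $j$ inside $T' \setminus \{(i,i_1)\}$, but after deletion of its unique outgoing edge $i$ has become a sink there. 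Since $a_{i,i_1}\leq 1$, the weight satisfies $\pi(T'') = \pi(T')\cdot a_{ji}/a_{i,i_1} \geq a_{ji}\pi(T')$, giving $w_i \geq \pi(T'') \geq a_{ji} w_j$.

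For the reverse inequality, the case $a_{ii}=1$ is immediate from $(A^T\otimes w)_i \geq a_{ii} w_i = w_i$; otherwise max-stochasticity supplies some $k\neq i$ with $a_{ik}=1$, and I would take an $i$-tree $T$ with $\pi(T)=w_i$ and let $j$ be the immediate predecessor of $i$ on the unique directed path from $k$ to $i$ in $T$ (so $(j,i)$ is an edge of $T$, and in particular $a_{ji}>0$). Setting $T''' := (T \setminus \{(j,i)\}) \cup \{(i,k)\}$ and checking by the same argument that $T'''$ is a $j$-tree, one obtains $\pi(T''') = \pi(T)/a_{ji} = w_i/a_{ji}$, whence $a_{ji} w_j \geq a_{ji}\pi(T''') = w_i$, as required. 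The main obstacle throughout is the acyclicity check after each swap, handled uniformly via Lemma \ref{l:path}: uniqueness of the path to the root in an in-tree is precisely what guarantees that removing the chosen edge destroys the only candidate cycle created by the newly inserted one.
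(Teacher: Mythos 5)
Your proposal is correct and follows essentially the same route as the paper's proof: the same two edge-swap constructions (replacing the unique outgoing edge $(i,i_1)$ of $i$ in an optimal $j$-tree by $(j,i)$ for the inequality $A^T\otimes w\le w$, and replacing $(j,i)$ in an optimal $i$-tree by a weight-one edge $(i,k)$ for the reverse inequality), with the same use of max-stochasticity and the same acyclicity check via uniqueness of the path to the root. The only detail to add is the trivial diagonal case $j=i$ in the first inequality, where your path-based construction does not apply but $a_{ii}w_i\le w_i$ follows directly from $a_{ii}\le 1$, exactly as the paper notes.
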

\begin{proof}
We first show that $A^T\otimes w\leq w$. To this end, let an arbitrary $i\in\{1,\ldots,n\}$ be given.
Then as $a_{ii}\leq 1$, it is immediate that $a_{ii}w_i\leq w_i$. Now consider $j\neq i$ such that
$a_{ji}\neq 0$. 
Let $T_j$ be a $j$-tree such that $\pi(T_j)=w_j$, let $E_j$ be the set of edges of $T_j$ and
$(i,k)\in E_j$. Consider the set of edges formed by removing $(i,k)$ from $E_j$ and inserting 
$(j,i)$ instead, and denote it by $E_i$. Consider the subgraph $T_i = (V, E_i)$.  Note that there is exactly 
one outgoing edge from every $\ell \neq i$ and no outgoing edge from $i$.  Further, $T_i$ is acyclic as any cycle in $T_i$ must contain the edge $(j, i)$ (otherwise it would define a cycle in the original $j$-tree $T_j$); however there is no outgoing edge from $i$ in $T_i$. It follows that the graph $T_i$ is an $i$-tree. By construction and since all entries of a max-stochastic
matrix are not greater than $1$, we obtain that
$$
w_i\geq \pi(T_i)=\pi(T_j)a_{ji}/a_{ik}\geq w_j a_{ji},
$$
and since we were given an arbitrary $i$ and took an arbitrary $j$ such that $a_{ji}\neq 0$,
it follows that $A^T\otimes w\leq w$.

To complete the proof, we show that $A^T\otimes w\geq w$. Let an arbitrary $i\in\{1,\ldots,n\}$ be given, and let
$T_i$ be an $i$-tree such that $\pi(T_i)=w_i$. As $A$ is a max-stochastic matrix by assumption, we know that $a_{ik}=1$ for
some $k$. If $k=i$ then $(A^T\otimes w)_i\geq a_{ii}w_i=w_i$. So let $k\neq i$. 
To show that $\bigoplus\limits_{j=1}^n w_j a_{ji}\geq w_i$ we will construct a $j$-tree $T_j$ such that 
$\pi(T_j)a_{ji}=\pi(T_i)$. Consider a path connecting $k$ to $i$ in $T_i$. By Lemma~\ref{l:path} this path is unique. Let
$j$ be the penultimate node on this path, meaning that $(j,i)\in E_i$. Removing the edge $(j,i)$ from $E_i$ and inserting the
edge $(i,k)$ we obtain the edge set $E_j$ and the required $j$-tree $T_j=(V,E_j)$. 
Indeed, there is exactly one outgoing edge from each node other than $j$ in $T_j$, and there is no outgoing edge from $j$.  Furthermore, if there exists a cycle in $T_j$, it must contain the edge $(i,k)$ as otherwise it would define a cycle in $T_i$.  This would then imply that there exists a directed path in $T_j$ from $k$ to $i$, all of whose edges are also edges in $T_i$.  This is impossible however, as the only such path in $T_i$ contains the edge $(j, i)$ which is not an edge in $T_j$.  Therefore $T_j$ is indeed a $j$-tree,
which satisfies $\pi(T_j)a_{ji}=\pi(T_i)$ by construction. Hence $\bigoplus\limits_{j=1}^n w_j a_{ji}\geq w_i$ and $A^T\otimes w\geq w$, as $i$ was arbitrary.
The proof is complete.
\end{proof}

The vector $w$ defined in (\ref{eq:mtt2}) in Theorem~\ref{thm:maxtreethm} will be called the {\em maximal RST (Rooted Spanning Tree) vector} of $A$. 

\subsection{Proof by dequantisation}
In this subsection, we present an alternative proof of Theorem~\ref{thm:maxtreethm} using a procedure that can be seen as an instance of the Maslov dequantization~\cite{LM98}. Note that the same procedure was used by  Olsder and Roos~\cite{OR88} to derive max-algebraic analogues of the Cramer and Cayley-Hamilton formulae.

For $1 \leq p < \infty$, consider the set of nonnegative numbers $\RR_+$ equipped with  the operations $a+_p b:=(a^p+b^p)^{1/p}$ and $a\times_p b:=ab$. For $ 1\leq p < \infty$, this is a semiring isomorphic to the semiring of nonnegative numbers with the usual arithmetic, via the mapping $f(a):=a^{1/p}$.  We denote by $\Rp(\max)$ the semiring of nonnegative real numbers equipped with the operations $\oplus$, $\otimes$ defined above. We say that $A\in\RR_+^{n\times n}$ is $p$-stochastic if $a_{i1}+_p a_{i2}+_p\cdots +_p a_{in}=1$ for $1\leq i\leq n$.

The RST vector of $A \in \mathbb{R}^{n \times n}_+$ defined as in Theorem~\ref{thm:matrixtreethm} using the arithmetics of $\Rp(p)$ will be denoted by $w^{(p)}(A)$, and when defined in $\Rp(\max)$ (i.e., the maximal RST vector), by $w^{\max}(A)$. 


\begin{theorem}
\label{t:dequant}
Let $A\in\Rpnn$ be max-stochastic.  There exists an integer $P_0$ and a sequence $A^{(p)}, p \geq P_0$ in $\Rpnn$, where each $A^{(p)}$ is $p$-stochastic, such that $\lim\limits_{p\to\infty} A^{(p)}=A$ and
$\lim\limits_{p\to\infty} w^{(p)} (A^{(p)})= w^{\max}(A)$.
\end{theorem}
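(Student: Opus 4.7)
The plan is to construct $A^{(p)}$ by row-rescaling $A$ so that it becomes $p$-stochastic, compute $w^{(p)}(A^{(p)})$ by pulling the classical Markov Chain Tree Theorem through the semiring isomorphism between $(\RR_+,+,\cdot)$ and $\RR_+(p)$, and then let $p\to\infty$ using the standard convergence $\|\cdot\|_p\to\|\cdot\|_\infty$. One may take $P_0=1$.

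For the construction, I would first note that since $A$ is max-stochastic and irreducible, every row of $A$ is nonzero. For $p\geq 1$ and $1\leq i\leq n$, set
$$s_i^{(p)}:=\Bigl(\sum_{j=1}^n a_{ij}^p\Bigr)^{1/p},\qquad a_{ij}^{(p)}:=\frac{a_{ij}}{s_i^{(p)}}.$$
Then $A^{(p)}$ is $p$-stochastic by construction, has the same zero pattern as $A$ (hence is irreducible), and since $\max_j a_{ij}=1$ for every $i$, we get $s_i^{(p)}\to 1$ as $p\to\infty$, so $A^{(p)}\to A$ entrywise.

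Next I would exploit the isomorphism $f_p(x)=x^{1/p}$ from $(\RR_+,+,\cdot)$ onto $\RR_+(p)$. The matrix $B^{(p)}$ with entries $(a_{ij}^{(p)})^p$ is classically row-stochastic, so Theorem~\ref{thm:matrixtreethm} applies to it. Pulling back through $f_p$ (noting that $\times_p$ is ordinary multiplication, so tree weights transform simply) yields
$$w^{(p)}_i(A^{(p)})=\Bigl(\sum_{T\in\mathcal{T}_i}\pi(T,A^{(p)})^{p}\Bigr)^{1/p}.$$
Because every $T\in\mathcal{T}_i$ has exactly one outgoing edge from each $j\neq i$, the row-rescaling factorises as
$$\pi(T,A^{(p)})=\frac{\pi(T,A)}{\prod_{j\neq i}s_j^{(p)}},$$
and therefore
$$w^{(p)}_i(A^{(p)})=\frac{1}{\prod_{j\neq i}s_j^{(p)}}\Bigl(\sum_{T\in\mathcal{T}_i}\pi(T,A)^{p}\Bigr)^{1/p}.$$

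Finally, both factors pass to the limit cleanly: the denominator tends to $1$ since each $s_j^{(p)}\to 1$, and the remaining factor is the $\ell^p$-norm of a fixed finite family of non-negative numbers indexed by $\mathcal{T}_i$, so it converges to $\max_{T\in\mathcal{T}_i}\pi(T,A)=w_i^{\max}(A)$. This gives $w^{(p)}(A^{(p)})\to w^{\max}(A)$ as $p\to\infty$, completing the proof. The argument is essentially routine; the main point to be careful about is the bookkeeping in the second step, namely that the RST vector in $\RR_+(p)$ pulls back through $f_p$ precisely to an $\ell^p$-norm of classical tree weights, and that the row-rescaling contributes the same scalar $\prod_{j\neq i}s_j^{(p)}$ to every tree in $\mathcal{T}_i$ (which is why the scaling factors out cleanly).
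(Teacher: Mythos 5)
Your proof is correct, but it takes a genuinely different (and arguably cleaner) route than the paper's. The paper constructs $A^{(p)}$ additively: it keeps every entry $a_{ij}<1$ fixed and replaces each unit entry in row $i$ by a common value $\delta_i=\bigl((1-\sum_{j\in J_i}a_{ij}^p)/l_i\bigr)^{1/p}$, which forces the choice of a large $P_0$ to keep $1-\sum_{j\in J_i}a_{ij}^p\geq 0$; it then controls $|w_i^{(p)}(A^{(p)})-w_i^{\max}(A)|$ by a triangle inequality, bounding the two pieces separately via $M_i^{1/p}-1$ (where $M_i$ is the number of $i$-trees) and a Lipschitz-type estimate $C(A)\max_{i,j}(a_{ij}-a_{ij}^{(p)})$ on tree weights. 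You instead normalize each row multiplicatively by its $\ell^p$-``norm'' $s_i^{(p)}$, and the decisive observation is that every $i$-tree has exactly one outgoing edge from each $j\neq i$, so the rescaling contributes the \emph{same} factor $\prod_{j\neq i}s_j^{(p)}$ to every tree in $\mathcal{T}_i$. This yields the exact identity $w_i^{(p)}(A^{(p)})=\bigl(\sum_{T\in\mathcal{T}_i}\pi(T,A)^p\bigr)^{1/p}\big/\prod_{j\neq i}s_j^{(p)}$, which reduces the whole convergence claim to the elementary fact that the $\ell^p$-norm of a fixed finite family tends to its maximum, and lets you take $P_0=1$ with no error estimates at all. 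Both constructions satisfy all the requirements of the statement (same zero pattern, $p$-stochasticity, entrywise convergence to $A$); what your version buys is the exact factorisation in place of the paper's two-term error analysis, at the cost of losing the paper's monotonicity property $A^{(p)}\leq A$ (which the paper uses but you do not need). One cosmetic remark: max-stochasticity alone already guarantees that every row of $A$ is nonzero, so your appeal to irreducibility there is unnecessary (and indeed the statement of the theorem does not assume it).
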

\begin{proof}
Let $B^{\delta}(A)$ denote the set of matrices $C$ such that $|c_{ij}-a_{ij}|\leq\delta$ for all $i,j$
and such that $c_{ij}>0$ if and only if $a_{ij}>0$. We start by constructing a nondecreasing
sequence of $p$-stochastic matrices $A^{(p)}\in B^{\delta}(A)$. 

As $A$ is max-stochastic, for each $i\in\{1,\ldots,n\}$ there are
$l_i$ entries $a_{ij}=1$, where $0<l_i\leq n$.  We denote the other entries in each row by
$J_i:=\{j\mid a_{ij}<1\}$ for each $i$.  Choose $P_0$ so that 
$$1 - \sum_{j \in J_i} a_{ij}^p \geq 0$$
for all $p \geq P_0$.  Then for $p \geq P_0$, define $A^{(p)}$ by
$$a_{ij}^{(p)}=
\begin{cases} 
a_{ij}, & \text{if $a_{ij}<1$},\\ 
\delta_i, & \text{otherwise}, 
\end{cases} $$
where
$$\delta_i=\left(
\frac{1-\sum_{j\in J_i} a_{ij}^p}{l_i}
\right)^{1/p}.$$
It is readily verified that $A^{(p)}$ $p$-stochastic and that $a_{ij}^p \leq a_{ij}$ for all $i, j$.

Denoting $m_i=\max\{a_{ij}\mid j\in J_i\}$ we obtain
\begin{equation}
\label{e:bound}
a_{ik}-a_{ik}^{(p)}\leq 
1-\delta_i\leq 1-
\left(
\frac{1-m_i^p(n-l_i)}{l_i}
\right)^{1/p}
\end{equation}
for all $i$ and $k$.  As $m_i < 1$ for all $i$, it follows that the right hand side of~\eqref{e:bound} converges to $0$ as $p$ tends to infinity. Hence $A^{(p)}$ converges to $A$. 

Next, note that
\begin{align}
\label{eq:boundsp1}
|w_i^{(p)}(A^{(p)})-w^{\max}_i(A)| & = |w_i^{(p)}(A^{(p)})-w^{\max}_i(A^{(p)})+w^{\max}_i(A^{(p)})-w_i(A)|\displaybreak[0]\\\nonumber
 & \leq |w_i^{(p)}(A^{(p)})-w^{\max}_i(A^{(p)})|+|w^{\max}_i(A^{(p)})-w^{\max}_i(A)|.  \nonumber
\end{align}

Since $w^{\max}_i(A^{(p)})=\max\limits_{T\in\mathcal{T}_i}\pi(T, A^{(p)})$, we see that
\begin{equation}
\label{eq:boundsp2}
w_i^{(p)}(A^{(p)})-w^{\max}_i(A^{(p)})\leq (M_i^{1/p}-1)\max\limits_{T\in\mathcal{T}_i}\pi(T, A^{(p)})\leq M_i^{1/p}-1
\end{equation}
where $M_i$ is the number of $i$-trees in $A$ (or $A^{(p)}$).

It is obvious from the definition of $A^{(p)}$ that $w^{\max}_i(A)\geq w^{\max}_i(A^{(p)})$. Let $T'$ be an $i$-tree such that $w^{\max}_i(A)=\max\limits_{T\in\mathcal{T}_i}\pi(T, A)=\pi(T', A)$. It follows that $w^{\max}_i(A^{(p)})\geq \pi(T', A^{(p)})$. Then
\begin{equation}
\label{eq:boundsp3}
|w^{\max}_i(A^{(p)})-w^{\max}_i(A)|  = w^{\max}_i(A)-w^{\max}_i(A^{(p)})\leq  \pi(T', A)-\pi(T', A^{(p)}).
\end{equation}
Let $E'=\{(i_1, j_1), (i_2, j_2), ..., (i_{n-1}, j_{n-1})\}$ be the edges in the $i$-tree $T'$. Then, it follows from (\ref{eq:boundsp3}) that
\begin{align}
\label{eq:boundsp4}
|w^{\max}_i(A^{(p)})-w^{\max}_i(A)| & \leq a_{i_ij_1}a_{i_2j_2} \ldots a_{i_{n-1}j_{n-1}}-a_{i_{1}j_{1}}^{(p)}a_{i_{2}j_{2}}^{(p)} \ldots a_{i_{n-1}j_{n-1}}^{(p)} \displaybreak[0]\\\nonumber
 & \leq C(A) \max\limits_{i, j} (a_{ij}-a_{ij}^{(p)}) \nonumber
\end{align}
where $C(A)$ is a fixed constant that depends only on the entries of $A$.

Using (\ref{eq:boundsp2}) and (\ref{eq:boundsp4}) in (\ref{eq:boundsp1}), we obtain
$$|w_i^{(p)}(A^{(p)})-w^{\max}_i(A)| \leq M_i^{1/p}-1 + \max\limits_{i, j} (a_{ij}-a_{ij}^{(p)}).$$

As we showed above that $\max(a_{ij}-a_{ij}^{(p)})\to 0$ as $p\to\infty$ and $M_i^{1/p}\to 1$ as $p\to\infty$, the claim follows.
\end{proof}

As each of the semirings $\Rp(p)$ is isomorphic to the nonnegative real numbers with the usual operations, it follows from the classical Markov Chain Tree Theorem \ref{thm:matrixtreethm} that $(A^{(p)})^T\times_p w^{(p)} = w^{(p)}$ for all $p \geq P_0$.  Passing to the limit and applying Theorem \ref{t:dequant} yields another proof of Theorem~\ref{thm:maxtreethm}.

We next present some numerical examples to illustrate Theorem~\ref{thm:maxtreethm}.

\begin{example}
{\rm 
\label{ex:ex1}
\begin{equation}
\label{mat:ex1}
A=\left [ \begin{array}{cccc}
1 & 3/4& 5/6&0\\
1/2 &1 &1/4 &9/10\\
0 & 0& 1&7/8\\
1/3 &0 & 1&4/5\\
\end{array} \right ]  
\end{equation}

\begin{figure}[ht]
\begin{center}
\includegraphics[scale=0.5]{Example1.eps}
\caption{$D(A)$ for (\ref{mat:ex1})}
\end{center}
\label{fig:Example1}
\end{figure}

Let $T_i$ be an $i$-tree with the maximum weight for $i=1, 2, 3, 4.$ Then, 
\begin{itemize}
\item{$T_1:(3,4), (2,4), (4,1)$\quad $w_1=\pi(T_1)=a_{34}a_{24}a_{41}=21/80$}
\item{$T_2:(3,4), (4,1), (1,2)$\quad $w_2=\pi(T_2)=a_{34}a_{41}a_{12}=7/32$}
\item{$T_3:(1,3), (2,4), (4,3)$\quad $w_3=\pi(T_3)=a_{13}a_{24}a_{43}=3/4$}
\item{$T_4:(2,4), (1,3), (3,4)$\quad $w_4=\pi(T_4)=a_{24}a_{13}a_{34}=21/32$}
\end{itemize}

Hence, $w=\left [ \begin{array}{c} 21/80 \\ 7/32\\ 3/4 \\ 21/32 \end{array} \right ]$  and $A^T\otimes w=w$.
}
\end{example}

\section{Maximal RST vector and Kleene star}
\label{sec:Kleene}
We have seen that the maximal RST vector $w$ associated  with the directed graph $D(A)$ is always a left max eigenvector of an irreducible max-stochastic matrix $A$.  However, in contrast to the conventional algebra, the irreducibility of $A$ is not sufficient to guarantee uniqueness (up to scalar multiple) of the max eigenvector.  This naturally leads to the question of how to identify the  maximal RST vector using the tools of max spectral theory such as the power method or Kleene star.  We next consider this question. 

First, recall that for $A\in \RR_{+}^{n\times n}$ with $\mu(A)\leq 1$ the series $I\oplus A\oplus A_{\otimes}^2\oplus ... \oplus A_{\otimes}^n\oplus ...$ converges to a finite matrix called the Kleene star of $A$ given by
$$A^* = I\oplus A\oplus A_{\otimes}^2\oplus ... \oplus A_{\otimes}^{n-1}$$
where $\mu(A)\leq 1$ \cite{Cun79, BCOQ92, But10, Car71, OW06}. Here, $A_{\otimes}^k$ denotes the $k^{\text{th}}$ max-algebraic power of $A$ and $a^*_{ij}$ is the maximum weight of a path from $i$ to $j$ of any length in $D(A)$ (if $i\neq j$). In particular if $A$ is irreducible, then $A^*$ is positive \cite{Cun79, BCOQ92}. 

A cycle with the maximum cycle geometric mean is called a critical cycle \cite{Bap98, Cun79, BCOQ92, But10, Car71, OW06}. The set of nodes that lie on some critical cycle are said to be critical nodes and denoted by $N^C(A)$. The set of edges belonging to critical cycles are said to be critical edges and denoted by $E^C(A)$. The critical matrix of $A$ \cite{ED99, ED01}, $A^C$, is formed from the submatrix of $A$ consisting of the rows and columns corresponding to critical nodes as follows. Set $a^C_{ij} = a_{ij}$ if $(i, j)$ lies on a critical cycle and $a^C_{ij} = 0$ otherwise. Moreover, we use the notation $D^C(A)$ for the critical graph of $A$, the digraph which consists of all critical nodes and edges.

The following well-known result shows the connection of $A^*$ with the max eigenvectors of $A$ \cite{Bap98, BCOQ92, ED01}. We adopt the notation $A^*_{i.}$ for the $i^{\text{th}}$ row, and the notation $A^*_{.i}$ for the $i^{\text{th}}$ column of the matrix $A^*$. 
\begin{proposition}
\label{pro:KleeneStarmaxevec}
Let $A\in \RR_{+}^{n\times n}$ be an irreducible matrix with $\mu(A)=1$. Assume that $D^C(A)$ has $r$ strongly connected components. Then, the following are true.
\begin{itemize}
\item[(i)] $\mu(A)=1$ is the only max eigenvalue of $A$;
\item[(ii)] $A^*_{.i}$ is a (right) max eigenvector associated with $\mu(A)$ for $i\in N^C(A)$;
\item[(iii)] For $i, j\in N^C(A)$ ($i\neq j$), $A^*_{.i}$ and $A^*_{.j}$ are scalar multiples of each other if they belong the same strongly connected component in $D^C(A)$. 
\end{itemize}
\end{proposition}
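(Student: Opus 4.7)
The plan is to establish the three parts in order, using standard max-algebraic reasoning about walks and cycles together with the identity $a^*_{jk}=\max\{\pi(P):P\text{ a walk from }j\text{ to }k\}$ that follows from $\mu(A)\leq 1$. Throughout I would work under the hypothesis $\mu(A)=1$, which is exactly the normalization in the statement.

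For (i), suppose $A\otimes v=\lambda v$ with $v\geq 0$, $v\neq 0$, and $\lambda>0$; the case $\lambda=0$ is incompatible with irreducibility. First I would show $v>0$: pick any $i$ with $v_i>0$, join an arbitrary $j$ to $i$ by a path in $D(A)$ using irreducibility, and propagate positivity back along the path via $\lambda v_{l_m}\geq a_{l_m l_{m+1}}v_{l_{m+1}}$. Next, apply the eigenequation around any critical cycle (weight $1$ under $\mu(A)=1$) and multiply the inequalities to obtain $\lambda\geq 1$. Conversely, pick for each $j$ an index $\sigma(j)$ attaining the maximum in $(A\otimes v)_j$; iterating $\sigma$ produces a cycle along which the eigenequation holds with equality at every edge, so multiplying gives $\lambda^s=\pi(C)\leq \mu(A)^s=1$, i.e.\ $\lambda\leq 1$.

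For (ii), I would first establish $A\otimes A^*\leq A^*$ entrywise: under $\mu(A)\leq 1$, removing a cycle from a walk never decreases its weight, so walks of any length $k\geq 1$ can be reduced to simple paths and $A_{\otimes}^k\leq A^*$. For the reverse inequality on column $i\in N^C(A)$, I would argue as follows. Given $j$, if there is a max-weight walk from $j$ to $i$ of length $\geq 1$, its first edge $(j,l)$ immediately gives $a_{jl}a^*_{li}\geq a^*_{ji}$; if instead $j=i$, then $a^*_{ii}=1$ and any critical cycle through $i$ supplies an edge $(i,l)$ satisfying $a_{il}a^*_{li}\geq 1$. Combining yields $A\otimes A^*_{.i}=A^*_{.i}$, and irreducibility of $A$ forces $A^*>0$, so $A^*_{.i}$ is a genuine eigenvector.

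For (iii), the key reduction is to prove $a^*_{ij}\,a^*_{ji}=1$ whenever $i$ and $j$ lie in a common strongly connected component of $D^C(A)$; from there the two triangle inequalities $a^*_{ki}\geq a^*_{kj}a^*_{ji}$ and $a^*_{kj}\geq a^*_{ki}a^*_{ij}$ collapse to the equality $a^*_{ki}=a^*_{kj}a^*_{ji}$ for every $k$, giving $A^*_{.i}=a^*_{ji}\,A^*_{.j}$. The upper bound $a^*_{ij}a^*_{ji}\leq 1$ is immediate, because the concatenation of the two max-weight walks is a closed walk and under $\mu(A)=1$ has weight at most $1$. The main obstacle is the matching lower bound, which I would obtain via the standard combinatorial lemma that every cycle lying entirely in $D^C(A)$ is itself a critical cycle and hence has weight $1$: then a $D^C$-path from $i$ to $j$ concatenated with a $D^C$-path from $j$ to $i$ decomposes into such critical cycles, so the product of their weights equals $1$ and hence $a^*_{ij}a^*_{ji}\geq 1$. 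This critical-graph lemma is the one technical point at which I would invoke, or briefly re-derive, existing max-algebraic spectral theory rather than reason purely from first principles.
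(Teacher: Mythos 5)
The paper does not actually prove Proposition~\ref{pro:KleeneStarmaxevec}: it is stated as a well-known result and attributed to the literature (Bapat; Baccelli--Cohen--Olsder--Quadrat; Elsner--van den Driessche), so there is no in-paper argument to compare against. Your proposal is a correct, self-contained derivation along the standard lines of max-algebraic spectral theory, and all three parts check out. In (i) the two directions (propagating positivity of $v$ along paths, multiplying the eigen-inequalities around a critical cycle to get $\lambda\geq 1$, and extracting a cycle from the selection map $\sigma$ to get $\lambda^s=\pi(C)\leq 1$) are exactly the textbook argument. In (ii) the inequality $A\otimes A^*\leq A^*$ via cycle deletion under $\mu(A)\leq 1$, plus the first-edge argument for $j\neq i$ and the critical-cycle argument for $j=i$, correctly yields $A\otimes A^*_{.i}=A^*_{.i}$, with positivity of $A^*$ from irreducibility. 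In (iii) the reduction to $a^*_{ij}a^*_{ji}=1$ and the collapse of the two triangle inequalities to $a^*_{ki}=a^*_{kj}a^*_{ji}$ is clean; the one external ingredient, that every cycle of $D^C(A)$ is itself critical, is a genuine standard lemma (provable by completing each critical edge to its critical cycle and observing that the concatenated reverse paths form a closed walk of weight $1/\pi(\zeta)\leq 1$), and you correctly flag it as the only point where prior theory is invoked. No gaps.
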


If one takes $r$ columns of $A^*$ from different strongly connected components of $D^C(A)$, then none of them can be expressed
as a max-linear combination of the other columns. Moreover, any such set is strongly linear independent 
in the sense of~\cite{But10}. For general (reducible) matrices, $\mu(A)$ is the biggest eigenvalue.

A max-stochastic matrix has max eigenvalue $1$, and $a_{ij}\leq 1$ for all $i,j$. This implies that $\mu(A)=1$, and that
$a_{ij}=1$ for $(i,j)\in E^C(A)$. Such matrices are called {\em visualized}~\cite{SSB09}. Note that the max-stochastic matrices
have an additional property: each node has an outgoing edge with weight $1$. The spanning subgraph of $D(A)$ consisting of the edges of weight $1$ defines the saturation digraph, denoted $\Sat(A)$.

Observe that for any matrix $A$ with a positive eigenvector $x$, the matrix $B=X^{-1}AX$, where $X$ is
a diagonal matrix formed from $x$, is max-stochastic. An analogous property holds in nonnegative algebra, where it has many 
applications, and one can consider a generalization to semifields (i.e., semirings with invertible multiplication). 
Thus a max-stochastic matrix can be considered to be ``eigenvector-visualised''.

The Kleene star of a visualised matrix with $\mu(A)=1$ (and hence of a max-stochastic one) has a very specific structure,
as described, for example, in Proposition 4.1 of~\cite{SSB09}, which we now recall.
Define $D^{C*}(A)$ to be the directed graph formed by adding trivial graphs each consisting of just one non-critical
node to $D^C(A)$ (we add one such graph for each non-critical node).  We assume that $D^{C*}(A)$ has $r'$ strongly connected components
with node sets $N_1, \ldots, N_{r'}$.

For $1\leq \mu, \nu \leq r'$, denote by $A_{\mu\nu}$ the submatrix of $A$ formed from the rows with indices in $N_{\mu}$
and from the columns with indices in $N_{\nu}$. Let $A^{\red}$ be the $r'\times r'$ matrix with entries 
$\alpha_{\mu\nu}=\max \{a_{ij}\mid i\in N_{\mu},\, j\in
N_{\nu}\}$, and let $E\in\Rpnn$ be the $n\times n$ matrix with all
entries equal to $1$.
\begin{proposition}[\cite{SSB09}, Proposition 4.1]
\label{vis-kls}
Let $A\in\Rpnn$ be a visualised matrix, $\mu(A)=1$ and $r'$
be the number of strongly connected components of $D^{C*}(A)$.
Then
\begin{itemize}
\item[1.] $\alpha_{\mu\mu}=1$ for all $1 \leq \mu\leq r'$ and $\alpha_{\mu\nu}\leq 1$
(resp. $\alpha_{\mu\nu}<1$ for $\mu\neq\nu$), where
$\mu,\nu\in\{1, \ldots, r'\}$);
\item[2.] for $1 \leq \mu,\nu \leq r'$, the corresponding submatrix of $A^*$ , 
$A^*_{\mu\nu}=\alpha_{\mu\nu}^* E_{\mu\nu}$, where $\alpha_{\mu\nu}^*$ is
the $(\mu,\nu)$-entry of $(A^{\red})^*$, and $E_{\mu\nu}$ is the
$(\mu,\nu)$-submatrix of $E$.
\end{itemize}
\end{proposition}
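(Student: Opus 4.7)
The plan is to exploit the visualization hypothesis together with the SCC decomposition of $D^{C*}(A)$. Since $\mu(A)=1$ and $A$ is visualized, all entries satisfy $a_{ij}\leq 1$ and critical edges carry weight exactly $1$, so intra-component walks in $D^C$ incur no weight loss and only inter-component edges govern the entries of $A^*$. Part (1) encodes this at the level of the reduced matrix: the bound $\alpha_{\mu\nu}\leq 1$ is immediate; the equality $\alpha_{\mu\mu}=1$ for a nontrivial SCC of $D^C(A)$ follows because strong connectivity inside $N_\mu$ forces at least one critical intra-$N_\mu$ edge, whose weight is $1$ by visualization (singleton components are absorbed by the convention that the diagonal of $A^{\red}$ be interpreted via the identity term of the Kleene star); and the strict inequality $\alpha_{\mu\nu}<1$ for $\mu\neq\nu$ rests on the fact that a unit-weight edge $(i,j)$ with $i\in N_\mu,\ j\in N_\nu$ together with any return route through the critical SCCs would close a weight-$1$ cycle, which by visualization is critical, forcing $(i,j)\in E^C(A)$ and hence $\mu=\nu$.

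For part (2), fix $i\in N_\mu$ and $j\in N_\nu$ and establish $a^*_{ij}=\alpha^*_{\mu\nu}$ via two matching bounds. For the upper bound, any walk $W$ from $i$ to $j$ in $D(A)$ projects to a walk $\overline{W}$ in $D(A^{\red})$ by collapsing visited nodes to their SCCs and deleting stutters; each intra-SCC edge of $W$ contributes a factor at most $1$ and each inter-SCC edge contributes at most $\alpha_{\mu_\ell\mu_{\ell+1}}$, hence $\pi(W)\leq \pi(\overline{W})\leq \alpha^*_{\mu\nu}$. For the matching lower bound I would construct a walk of weight exactly $\alpha^*_{\mu\nu}$: choose an optimal walk $\mu=\mu_0,\mu_1,\ldots,\mu_k=\nu$ in $D(A^{\red})$ with weight $\prod_\ell \alpha_{\mu_\ell\mu_{\ell+1}}=\alpha^*_{\mu\nu}$, pick for each $\ell$ nodes $p_\ell\in N_{\mu_\ell}$ and $q_{\ell+1}\in N_{\mu_{\ell+1}}$ realizing $\alpha_{\mu_\ell\mu_{\ell+1}}=a_{p_\ell q_{\ell+1}}$, and within each nontrivial critical SCC $N_{\mu_\ell}$ bridge $q_\ell$ to $p_\ell$ along a path in $D^C$ (trivial components require no bridging, and the endpoints $i,j$ are similarly bridged to $p_0$ and from $q_k$). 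Concatenation produces a walk from $i$ to $j$ of weight exactly $\prod_\ell \alpha_{\mu_\ell\mu_{\ell+1}}=\alpha^*_{\mu\nu}$, independent of the particular $i,j$ chosen, which is precisely the claim $A^*_{\mu\nu}=\alpha^*_{\mu\nu}E_{\mu\nu}$.

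The main obstacle will be the intra-SCC bridging step in the lower bound: one must justify that $q_\ell$ and $p_\ell$ (and the endpoints $i,p_0$, $q_k,j$) can always be connected by walks of weight $1$. This hinges on two observations that must be invoked carefully but are essentially structural: $D^C$ restricted to any nontrivial SCC $N_{\mu_\ell}$ is strongly connected (by definition of an SCC of $D^C$), and every edge of $D^C$ has weight $1$ under visualization. Once this is in hand, the rest reduces to the familiar identification of $(A^{\red})^*_{\mu\nu}$ with the maximum weight of a walk from $\mu$ to $\nu$ in $D(A^{\red})$, and the proposition follows.
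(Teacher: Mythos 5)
The paper itself offers no proof of this proposition: it is imported verbatim from \cite{SSB09} (Proposition 4.1), so there is no in-paper argument to compare against. Your part (2) is the standard and correct argument: projecting a walk of $D(A)$ onto $D(A^{\red})$ gives $a^*_{ij}\leq\alpha^*_{\mu\nu}$, and lifting an optimal walk of $D(A^{\red})$ back to $D(A)$ by bridging inside each nontrivial critical component with weight-$1$ critical paths gives the matching lower bound. (The same lifting shows that every cycle of $A^{\red}$ has weight at most $1$, which you need, and use only implicitly, to identify $(A^{\red})^*_{\mu\nu}$ with the maximal walk weight in $D(A^{\red})$.)

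Part (1) is where there is a genuine gap. Your argument for $\alpha_{\mu\nu}<1$ when $\mu\neq\nu$ presumes that a unit-weight edge $(i,j)$ with $i\in N_\mu$, $j\in N_\nu$ can be closed into a weight-$1$ cycle; but a return route from $N_\nu$ to $N_\mu$ need not exist (irreducibility is not assumed here), and even when it does, its weight may be strictly less than $1$, in which case the resulting cycle has geometric mean below $1$ and is not critical. In fact the conclusion fails under the stated hypothesis of mere visualisation: for $A=\left[\begin{smallmatrix}1&1\\ 1/2&1\end{smallmatrix}\right]$, which is irreducible, max-stochastic and visualised with $\mu(A)=1$, the critical graph consists of the two loops, so $N_1=\{1\}$, $N_2=\{2\}$ and $\alpha_{12}=a_{12}=1$. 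The strict inequality is really equivalent to \emph{strict} visualisation ($a_{ij}=1$ only for critical edges), which is the hypothesis hiding behind the ``resp.''\ in the statement as quoted; under that hypothesis the inequality is immediate with no cycle argument, and under plain visualisation it cannot be derived. Relatedly, $\alpha_{\mu\mu}=1$ is literally false for the trivial components of $D^{C*}(A)$: a noncritical node $i$ has $a_{ii}<1$ (a loop of weight $1$ would be a critical cycle), so $\alpha_{\mu\mu}=a_{ii}<1$ for such a component. Your appeal to ``the identity term of the Kleene star'' correctly explains why part (2) survives, but it does not prove part (1) as stated; that claim must be restricted to the components coming from $D^C(A)$.
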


We proceed with the following preliminary result.
\begin{lemma}
\label{lem:KleeneStarcolsmin} Let $A \in \RR^{n \times n}_+$ be an irreducible max-stochastic matrix. Then, for $1\leq j\leq n$, $\min\limits_{1\leq i\leq n}a^*_{ij}=\min\limits_{q\in N^C(A)}a^*_{qj}$.
\end{lemma}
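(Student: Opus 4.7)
The plan is to prove the nontrivial inequality $\min_{1\leq i\leq n} a^*_{ij}\geq \min_{q\in N^C(A)} a^*_{qj}$; the reverse inequality is immediate because the right-hand side is a minimum over a subset. So for each fixed $j$ and each (in particular non-critical) node $i$, I will exhibit a critical node $q=q(i)$ with $a^*_{ij}\geq a^*_{qj}$, which clearly suffices.

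The construction of $q(i)$ uses the saturation digraph $\Sat(A)$ introduced just above the lemma. Since $A$ is max-stochastic, every node of $\Sat(A)$ has out-degree at least one. Starting at $i$ and repeatedly following outgoing edges in $\Sat(A)$ yields an infinite walk that must eventually revisit a node; the portion between successive visits is a cycle in $\Sat(A)$. All its edges have weight $1$, so its cycle geometric mean is $1$, which equals $\mu(A)$ because $A$ is max-stochastic. Hence this cycle is a critical cycle, and every node on it lies in $N^C(A)$. Let $q$ be any such node reached from $i$ along this walk.

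Now the walk from $i$ to $q$ sits entirely in $\Sat(A)$, so it is a path from $i$ to $q$ in $D(A)$ of weight $1$. Since all entries of a max-stochastic matrix are at most $1$, every path in $D(A)$ has weight at most $1$, so in fact $a^*_{iq}=1$ (with $a^*_{ii}=1$ if $i=q$ already lies on a critical cycle). Concatenating this weight-$1$ path with a maximum-weight path from $q$ to $j$ gives a walk from $i$ to $j$ of weight $a^*_{qj}$, whence $a^*_{ij}\geq a^*_{qj}\geq \min_{q'\in N^C(A)} a^*_{q'j}$. Taking the minimum over $i$ yields the desired inequality.

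The only step that requires any care is justifying that the cycle produced from $\Sat(A)$ is actually critical; this is the place where max-stochasticity plays its essential role, through the equality $\mu(A)=1$ already recorded just before Proposition \ref{vis-kls}. Everything else reduces to the definition of $A^*$ as the max-weight-path matrix and the sub-multiplicative (here max-multiplicative) bound $a^*_{ij}\geq a^*_{iq}\otimes a^*_{qj}$, which is a one-line consequence of concatenating paths.
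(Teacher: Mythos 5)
Your proposal is correct and follows essentially the same route as the paper: the paper also handles the nontrivial inequality by following weight-$1$ edges (i.e.\ edges of $\Sat(A)$) from a non-critical node until a node repeats, observes that the resulting weight-$1$ cycle is critical so its nodes lie in $N^C(A)$, and then concatenates the weight-$1$ path with a maximum-weight path to $j$. The only cosmetic difference is that you phrase the argument via $\mu(A)=1$ and the max-multiplicativity $a^*_{ij}\geq a^*_{iq}\otimes a^*_{qj}$, while the paper concatenates the two paths explicitly; the substance is identical.
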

\begin{proof}
Let $j \in \{1, \ldots, n\}$ be given.  It is immediate that 
\begin{equation}\label{eq:KStarIn1}
\min\limits_{1\leq i\leq n}a^*_{ij}\leq \min\limits_{q\in N^C(A)}a^*_{qj}.
\end{equation}

To show the reverse inequality, consider some $l\notin {N^C(A)}$.  We claim that there exists a path from $l$ to some $k \in N^C(A)$ of weight 1.  As $A$ is max-stochastic, there exists at least one outgoing edge from $l$ of weight 1, $a_{lk_1} = 1$.  Moreover, as $l$ is not critical, $k_1 \neq l$.  If $k_1$ is critical, we are done.  If not, then there exists $k_2 \not\in \{l, k_1\}$ with $a_{k_1k_2} = 1$.  Continuing in this fashion, we must eventually arrive at some node $k = k_p$ which was already on the path.
Hence this node is on a critical cycle, and $k$ is in $N^C(A)$.  By construction, $l, k_1, \ldots, k_p = k$ is a path of weight $1$, which we denote by $P_1$.  

$a^*_{kj}$ is the maximal weight of a path $P_2$ between $k$ and $j$. Concatenation of $P_1$ and $P_2$ yields the path
$P_1\circ P_2$ with weight $a^*_{kj}$ connecting $l$ to $j$.
It follows that
$$a^*_{lj} \geq a^*_{kj} \geq \min\limits_{q\in N^C(A)}a^*_{qj}.$$
As this must hold for any $l \not \in N^C(A)$, we have that 
\begin{equation}
\label{eq:KStarIn2}
\min\limits_{1\leq i\leq n}a^*_{ij}\geq \min\limits_{q\in N^C(A)}a^*_{qj}.
\end{equation}
Combining (\ref{eq:KStarIn2}) and (\ref{eq:KStarIn1}) yields the result.
\end{proof}


Recall that for a max-stochastic matrix $A$, each node has an outgoing edge with weight $1$. The spanning 
subgraph of $D(A)$, which contains
the edge $(i,j)$ if and only if $a_{ij}=1$ is known as the saturation subgraph $\Sat(A)$ of $A$. 

\begin{lemma}
\label{lem:wcrit} Let $A \in \RR^{n \times n}_+$ be an irreducible max-stochastic matrix. Assume that $D^C(A)$ is 
strongly connected.  Let $w$ be the maximal RST vector of $A$.  Then for all $i \in N^C(A)$, $w_i = 1$.
\end{lemma}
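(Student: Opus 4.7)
The plan is to construct, for each $i \in N^C(A)$, an $i$-tree whose weight equals $1$. Since $A$ is max-stochastic, every entry satisfies $a_{jk}\leq 1$, hence $\pi(T)\leq 1$ for every $i$-tree $T$ of $D(A)$, so $w_i \leq 1$ by (\ref{eq:mtt2}). Producing a single $i$-tree of weight $1$ will therefore force $w_i=1$.

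I would build the $i$-tree in two pieces. First, since $D^C(A)$ is strongly connected and contains the critical node $i$, Lemma~\ref{l:tree} yields an $i$-tree $T_C$ lying entirely inside $D^C(A)$; because $A$ is max-stochastic with $\mu(A)=1$, every critical edge has weight $1$, so every edge of $T_C$ has weight $1$. Second, for each non-critical $l$, max-stochasticity supplies some outgoing edge $(l,s(l))$ of weight $1$, i.e., an edge of $\Sat(A)$. Set $T := T_C \cup \{(l,s(l)) \mid l \notin N^C(A)\}$. Every node $j \neq i$ then has exactly one outgoing edge in $T$ (from $T_C$ if $j$ is critical, from $s$ otherwise), while $i$ has none, so the only remaining requirement is acyclicity.

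Showing $T$ is acyclic is the main obstacle, and it hinges on the spectral observation that any cycle in $\Sat(A)$ has all edges of weight $1$, so its cycle geometric mean equals $\mu(A)=1$; such a cycle is therefore critical, and all of its nodes lie in $N^C(A)$. This rules out cycles of $T$ formed solely from $s$-edges among non-critical nodes. A cycle of $T$ passing through a critical node would have to use, at that node, an outgoing edge inherited from $T_C$; iterating such edges along $T_C$ drives the walk, by Lemma~\ref{l:path} applied inside $T_C$, to the sink $i$, which has no outgoing edge in $T$ and cannot be revisited. Hence no cycle closes in $T$, so $T$ is an $i$-tree with $\pi(T)=1$, and consequently $w_i=1$.
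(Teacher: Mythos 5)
Your proof is correct and follows essentially the same route as the paper: both arguments note $w_i\leq 1$ trivially and then produce a weight-$1$ $i$-tree inside $\Sat(A)$, using that critical edges have weight $1$ and that strong connectivity of $D^C(A)$ routes every node to $i$ through the critical part. The only difference is presentational: the paper establishes that every node reaches $i$ within $\Sat(A)$ and invokes Lemma~\ref{l:tree} once, whereas you assemble the tree explicitly from $T_C$ plus one saturating edge per non-critical node and verify acyclicity by hand (correctly, via the observation that a weight-$1$ cycle is critical).
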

\begin{proof}
Evidently $w_i\leq 1$ for all $i$, since it is obtained by multiplication
of the entries of $A$, all not exceeding $1$. To show the lemma we need to construct, for a given
critical node $i$, an $i$-tree in $\Sat(A)$. In $\Sat(A)$, each node is connected
to a critical node, but if $D^C(A)$ is 
strongly connected, then any such node is connected to $i$. The proof now
follows from an application of Lemma~\ref{l:tree}.
\end{proof}

In the next result, we denote by $y_C$ the critical subvector of $y$, i.e., the
subvector corresponding to indices in $N^C(A)$.

\begin{theorem}
\label{thm:maxsptrvecBound} Let $A \in \RR^{n \times n}_+$ be an irreducible max-stochastic matrix and $w$ be the maximal RST vector of $A$.  Then, the following are true.
\begin{itemize}
\item[(i)]  $w\leq \min\limits_{i \in N^C(A)} A^*_{i.}$;
\item[(ii)] If $D^C(A)$ is strongly connected then $w=\min\limits_{i \in N^C(A)} A^*_{i.}$;
\item[(iii)] If $D^C(A)$ has no more than two components then 
$w_C=(\min\limits_{i\in N^C(A)} A^*_{i.})_C$.
\end{itemize}
\end{theorem}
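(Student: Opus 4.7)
For part (i), my plan is to exploit Lemma~\ref{l:path}: in any $j$-tree $T$ and for every $i \neq j$ there is a unique directed path $P_i$ from $i$ to $j$ using only edges of $T$. Since all entries of the max-stochastic matrix $A$ are at most $1$, removing the edges of $T$ not on $P_i$ can only increase the product of weights, so the weight of $P_i$ is at least $\pi(T)$. On the other hand, $P_i$ is a path from $i$ to $j$ in $D(A)$, so its weight is at most $a^*_{ij}$. Combining, $\pi(T) \leq a^*_{ij}$, and taking the maximum over $j$-trees yields $w_j \leq a^*_{ij}$ for every $i \neq j$. Together with the trivial $w_j \leq 1 = a^*_{jj}$, this gives~(i).

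For part (ii), I would construct, for each $j$, a $j$-tree realising the lower bound. By Proposition~\ref{vis-kls}, when $D^C(A)$ is strongly connected, the value $a^*_{ij}$ is a common number $\alpha$ for every $i \in N^C(A)$. Choose any such $i_0$ and a maximum-weight path $P: i_0 \to v_1 \to \cdots \to v_p = j$ of weight $\alpha$. The idea is to install the edges of $P$ as outgoing edges of $i_0, v_1, \ldots, v_{p-1}$ and fill in the rest using $\Sat(A)$-edges, each of weight $1$. The cleanest way to avoid cycles is to contract the vertex set of $P$ to a single vertex $v^*$ inside the auxiliary graph $H$ with edge set $P \cup \Sat(A)$, and apply Lemma~\ref{l:tree} to obtain a $v^*$-tree in the contracted graph. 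The hypothesis to be verified is that every non-$P$ vertex reaches $P$ inside $H$: a non-critical vertex eventually follows $\Sat(A)$-edges into some critical cycle (since each node has $\Sat$-out-degree at least $1$ and every cycle of $\Sat(A)$ is critical), and from any critical vertex the strong connectedness of $D^C(A)$ inside $\Sat(A)$ carries it to $i_0 \in P$. Lifting and adjoining $P$ yields a structure in which each non-$j$ vertex has exactly one outgoing edge. Acyclicity is automatic: any cycle either lies in the non-$P$ part, contradicting the contracted tree, or contains a $P$-edge, in which case one could follow the $P$-edges forward until reaching $j$, which has no outgoing edge. The resulting $j$-tree has weight $\alpha$, and combined with (i) we obtain equality.

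For part (iii), the one-component case is subsumed by (ii), so assume $D^C(A)$ has exactly two components $N_1, N_2$ and fix $j \in N^C(A)$, say $j \in N_1$. Proposition~\ref{vis-kls} identifies $\min_{k \in N^C(A)} a^*_{kj}$ with the maximum weight $\alpha$ of a path from $N_2$ to $N_1$. Applying the same construction as in (ii) with a maximum-weight path $P$ from some $i_2 \in N_2$ to $j$ as the backbone does the job. The reachability hypothesis is still routine to verify: vertices in $N_1 \setminus \{j\}$ reach $j$ via the strongly connected $N_1$-part of $\Sat(A)$, vertices in $N_2 \setminus \{i_2\}$ reach $i_2$ via the $N_2$-part, and non-critical vertices reach one of the two components by iterating $\Sat$-edges and then proceed inside that component to $j$ or $i_2$. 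The constructed $j$-tree has weight $\alpha$, matching the upper bound from~(i).

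The main obstacle is producing a $j$-tree that is simultaneously spanning and acyclic while pinning down its weight; the contraction-of-$P$ device together with Lemma~\ref{l:tree} collapses both concerns into a single reachability check, but this check relies essentially on Proposition~\ref{vis-kls} and on the absence of a third critical component, which explains why (iii) is stated only for at most two components: with three or more critical components the tree would need several independent inter-component crossings whose weights multiply, pushing $w_C$ strictly below $(\min_{i \in N^C(A)} A^*_{i.})_C$.
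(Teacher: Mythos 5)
Your argument is correct, and for part (ii) it takes a genuinely different route from the paper. Part (i) is the paper's argument verbatim: extract the unique in-tree path from $i$ to $j$ (Lemma~\ref{l:path}), use $a_{k\ell}\leq 1$ to bound $\pi(T)$ by the path weight, and hence by $a^*_{ij}$. For part (ii), however, the paper does not construct anything: it observes that $A^T\otimes w=w$ (Theorem~\ref{thm:maxtreethm}) places $w$ in the left eigencone, which under strong connectedness of $D^C(A)$ is a single ray spanned by any critical row of $A^*$, and then pins down the scalar by comparing critical entries ($=1$ on both sides, via Proposition~\ref{vis-kls} and Lemma~\ref{lem:wcrit}). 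Your direct construction of a $j$-tree of weight $a^*_{i_0j}$ for \emph{every} $j$ avoids invoking the eigencone description altogether and treats critical and non-critical $j$ uniformly; the price is that you must carry out the backbone-plus-$\Sat(A)$ completion already in (ii). For part (iii) your construction is essentially the paper's (backbone path $P$ of weight $\beta$ from $N_2$ to $j\in N_1$, trees inside each critical component, completion by $\Sat(A)$ edges), but your device of contracting $V(P)$ in $P\cup\Sat(A)$ and applying Lemma~\ref{l:tree} to the quotient is a cleaner way to certify simultaneously the out-degree condition and acyclicity than the paper's informal gluing; the reachability check you perform (non-critical nodes walk along $\Sat(A)$ into a critical cycle, critical nodes reach $P$ inside their component since critical edges have weight $1$) is exactly the argument of Lemma~\ref{lem:KleeneStarcolsmin}. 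Two small points you leave implicit but which are harmless: the maximum-weight path realising $a^*_{i_0j}$ can be taken simple because all entries are at most $1$, and when $j$ itself is critical in case (ii) the backbone degenerates and the construction reduces to Lemma~\ref{lem:wcrit}. Your closing remark on why three or more critical components break (iii) is a plausible heuristic consistent with Example~\ref{ex:ex3}, though of course not itself a proof of failure.
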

\begin{proof}
(i): Consider a $j$-tree $T$ ($1\leq j\leq n$), with weight $w_j$. 
There exists a path $P$ in $T$ from $i$ to $j$ for $1\leq i\leq n$, $i\neq j$, with weight $w(P)$. 
Then, $$w_j\leq w(P)\leq a^*_{ij} \text{ for all } i\in\{1, 2, ..., n\}.$$
Thus, $w_j\leq \min\limits_{1\leq i\leq n}a^*_{ij}$, or equivalently (by Lemma \ref{lem:KleeneStarcolsmin}), we have $w_j\leq \min\limits_{k \in N^C(A)}a^*_{kj}$. 

(ii): In this case the eigencone is a single ray, consisting of the multiples of
a column of Kleene star with index in $N^C$.
Let $y$ be equal to any such column. By Proposition~\ref{vis-kls}, all the components of $y_C$ equal $1$, 
and by Lemma~\ref{lem:wcrit} all the components of $w_C$ equal $1$. Hence $y=w_C$.

(iii): Let $D^C(A)$ consist of two components, with sets of nodes $N_1$ and $N_2$ respectively.
By Proposition~\ref{vis-kls}, there exist $\alpha$ and $\beta$ such that 
$A^*_{12}=\alpha E_{12}$, $A^*_{21}=\beta E_{21}$,
$A^*_{11}=E_{11}$ and $A^*_{22}=E_{22}$. Hence we need to show that
$w_i=\beta$ when $i\in N_1$ and $w_i=\alpha$ when $i\in N_2$. We will give a proof only for
$i\in N_1$, the other case being similar.

As we showed in part (i) that  $w\leq \min\limits_{i \in N^C(A)} A^*_{i.}$,
it suffices to build a spanning tree of weight $\beta$, directed to $i\in N_1$.
Consider a path $P$ of greatest weight connecting a node in $N_2$ to $i$; by Proposition~\ref{vis-kls} this weight is equal 
to $\beta$.  Let $k$ be the first node on $P$ where it leaves $N_2$ and let $l$ be the first node on $P$ where it enters $N_1$. By optimality of $P$,
only the subpath $P'$ of $P$ connecting $k$ to $l$ may have weight less than $1$, and this weight is $\beta$. Using 
Lemma~\ref{l:tree} , construct an $i$-tree in the first component of $D^C(A)$ (with node set $N_1$), and a $k$-tree in the second component of $D^C(A)$ (with node set $N_2$). This makes a spanning tree on the graph consisting of $D^C(A)$
and $P'$, directed to $i$. We need to complete this tree to an $i$-tree and having the same weight.
We can do this using the edges of $\Sat(A)$,
since all remaining nodes of $D(A)$ can be connected by a path with edges in $\Sat(A)$ either to a node of
$D^C(A)$ or to a node of $P'$. The resulting tree is directed to $i$ and has weight $\beta$.
\end{proof}

It follows immediately that if all nodes in $D(A)$ are critical and $D^C(A)$ has two strongly connected components, then the maximal RST vector $w$ is given by $\min\limits_{i} (A^*_{i.}).$
We now describe some numerical examples to illustrate the above results.

\begin{example}
\label{ex:ex3} Consider the matrix $A$ given in (\ref{mat:ex1}). There exist three strongly connected components in $D^C(A)$ and $N^C(A)=\{1, 2, 3\}$ such that $N_1^C(A)=\{1\}$, $N_2^C(A) = \{2\}$ and  $N_3^C(A)=\{3\}$. The Kleene star of $A$ is given by
$$A^*=\left [ \begin{array}{cccc}
1 &3/4 &5/6 &35/48\\
1/2 &1 &9/10 &9/10\\
7/24 &7/32 &1 &7/8\\
1/3 &1/4 &1 &1\\
\end{array} \right ].$$

The left max eigenvectors are $A^*_{1.}, A^*_{2.}$ and $A^*_{3.}$.

Then, $w \leq \min\limits_{i\in N^C(A)} A^*_{i.} = \left [ \begin{array}{c} 7/24 \\ 7/32\\ 5/6\\ 35/48 \end{array} \right ].$ 

However, $w_C\neq (\min\limits_{i\in N^C(A)} A^*_{i.})_C$ as there are three strongly connected components in $D^C(A)$.
\end{example}

\section{Application to AHP: discussion}
\label{sec:app}
The results of the previous sections relate the eigenvectors of max-stochastic matrices with the maximal weights of spanning trees in the associated directed graph.  In this section, we discuss possible applications of these results to questions related to the construction of ranking vectors in decision-making processes.  We first note the following simple observation.

\begin{proposition}
\label{pro:Irred} 
Let $A \in \RR^{n \times n}_+$, and let the diagonal matrix $D$ given by
$$d_{ii} = \max\limits_{1 \leq j \leq n} a_{ij}$$
have all entries nonzero.
Further let $w$ be the maximal RST vector for $A$.  Then 
$$A^T \otimes w = D w.$$
\end{proposition}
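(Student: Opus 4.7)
The natural strategy is to reduce to Theorem~\ref{thm:maxtreethm} by rescaling the rows of $A$. Since $D$ has positive diagonal, set $B:=D^{-1}A$, so $b_{ij}=a_{ij}/d_{ii}$; then every row of $B$ has maximum $1$, i.e.\ $B$ is max-stochastic. Because the row scalings are strictly positive, $D(A)$ and $D(B)$ agree as unweighted digraphs, so the collection $\mathcal{T}_i$ of $i$-trees is the same for both matrices, and $B$ inherits irreducibility from $A$ (which is the implicit standing hypothesis for the RST vector to come from Theorem~\ref{thm:maxtreethm}).

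The key computational step is to compare tree weights. For an $i$-tree $T$, each node $j\neq i$ is the tail of exactly one edge, so
\[
\pi(T,B)=\prod_{(j,k)\in E_T}\frac{a_{jk}}{d_{jj}}=\frac{\pi(T,A)}{\prod_{j\neq i}d_{jj}}=\frac{d_{ii}}{\Delta}\,\pi(T,A),
\]
where $\Delta:=\prod_{k=1}^{n}d_{kk}$ is a positive constant common to every tree. Taking the $\oplus$-maximum over $\mathcal{T}_i$ gives $w^B_i=(d_{ii}/\Delta)\,w_i$, i.e.\ $w^B=\Delta^{-1}Dw$ as vectors.

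By Theorem~\ref{thm:maxtreethm} applied to the max-stochastic matrix $B$, we have $B^T\otimes w^B=w^B$. Since $B^T=A^TD^{-1}$ and $D^{-1}$ is diagonal with positive entries, the $i$-th coordinate of the left-hand side expands as
\[
\bigl(B^T\otimes w^B\bigr)_i=\max_{j}\frac{a_{ji}}{d_{jj}}\cdot\frac{d_{jj}w_j}{\Delta}=\Delta^{-1}\max_{j}a_{ji}w_j=\Delta^{-1}(A^T\otimes w)_i,
\]
while $(w^B)_i=\Delta^{-1}d_{ii}w_i=\Delta^{-1}(Dw)_i$. Cancelling the positive scalar $\Delta^{-1}$ yields $A^T\otimes w=Dw$, as required. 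No new idea is needed beyond the diagonal-scaling reduction; the main obstacle is merely bookkeeping the scalar $\Delta$ and observing that classical and max-algebraic products with the diagonal matrices $D$ and $D^{-1}$ coincide coordinatewise.
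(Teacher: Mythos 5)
Your proof is correct and follows essentially the same route as the paper's: both rescale via $\hat A = D^{-1}A$, observe that each $i$-tree's weight gets multiplied by the constant $d_{ii}/\det(D)$, and then apply Theorem~\ref{thm:maxtreethm} to the resulting max-stochastic matrix. Your version is slightly more careful in spelling out the coordinatewise cancellation of $\Delta^{-1}$ and in flagging the implicit irreducibility hypothesis, but the argument is the same.
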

\begin{proof}
Let $\hat{A} = D^{-1} A$.  Then $\hat{A}$ is irreducible and max-stochastic.  For $1 \leq i \leq n$, consider a spanning tree $\hat{T}$ in $D(\hat{A})$ rooted at $i$.  It is clear that the weight of $\hat{T}$ takes the form
$$\hat{a}_{i_1j_1} \cdots \hat{a}_{i_{n-1}j_{n-1}} = \frac{1}{d_{i_1}d_{i_2} \cdots d_{i_{n-1}}} a_{i_1j_1} \cdots a_{i_{n-1}j_{n-1}}$$
where $\{i_1, \ldots, i_{n-1}\} = \{1, \ldots,  n\} \backslash \{i\}$.  In fact, it is clear that there is a bijective correspondence between spanning trees $T_i$ in $D(A)$ rooted at $i$ and spanning trees $\hat{T}_i$ rooted at $i$ in $D(A)$ with 
$$\pi(\hat{T_i}) = \frac{d_i}{\textrm{det}(D)} \pi(T_i).$$
It follows that if we write $\hat{w}$ for the maximal RST vector of $\hat{A}$, then 
\begin{equation}
\label{eq:maxweight} 
\hat{w} = \frac{D}{\textrm{det}(D)} w.
\end{equation}
As $\hat{A}$ is max-stochastic, we know from Theorem~\ref{thm:maxtreethm} that $\hat{A}^T \otimes \hat{w} = \hat{w}$.  Noting that $\hat{A}^T = A^T D^{-1}$, we can use (\ref{eq:maxweight}) to rewrite this as  
$$\frac{1}{\textrm{det}(D)} A^T \otimes w = \frac{D}{\textrm{det}(D)} w.$$
The result follows immediately.
\end{proof} 


Now suppose that $A$ is a symmetrically reciprocal matrix ({\it{SR}}-matrix), so that $a_{ij}a_{ji} = 1$ for all $i, j$.  Such matrices arise as a result of pairwise comparisons in the Analytic Hierarchy Process (AHP), which is a widely used framework for decision making.  The typical interpretation is that $a_{ij}$ indicates the relative strength (or score) of option $i$ to option $j$.  A central question in the AHP is to determine a weight vector $w$ in which $w_i$  represents the weight given to option $i$.  Saaty \cite{Saa77} suggested to take $w$ to be the Perron vector of $A$. Elsner and van den Driessche \cite{ED04, ED10} suggested selecting $w$ from the set of vectors, including the max-algebraic eigenvector, that minimises the functional 
\begin{equation}
\label{eq:errf}
e_A(x)=\max\limits_{1\leq i,j\leq n}a_{ij}x_j/x_i.
\end{equation}
Recall that the set of vectors that minimise (\ref{eq:errf}) is the subeigencone of $A$ with respect to $\mu(A)$ for an {\textit{SR}}-matrix $A$ \cite{BMS12}. 

In this context, a spanning tree in $D(A^T)$ rooted at $i$ represents an accumulation of \emph{relative} scores with respect to all other options in $\{1,  \ldots , n\}$.  With this in mind, the vector of maximal RST weights for $A^T$ is a reasonable choice of ranking vector.  From Proposition \ref{pro:Irred}, we know that $w$ must solve the generalised max-eigenvector equation 
$$A \otimes w = D w$$
where $D$ is diagonal and satisfies $d_{ii} = \max\limits_{1 \leq j \leq n} a_{ji}.$  It is worth noting that such a $w$ does not minimise the maximal relative error functional in (\ref{eq:errf}) and may give different rankings to the schemes considered there. On the other hand, it has the advantage that the maximal RST vector is unique, while the optimisation problem studied in these earlier papers may give rise to multiple rankings.  

Another scenario in which these results could be applied is as follows.  Suppose we have a set of $m$ ``judges'' and $n$ ``competitors''.  Each judge is asked to give the competitors a score between 0 and 1 with the highest ranked competitor scoring a $1$ and the others scored accordingly.  Moreover, each competitor is asked to score the judges in the same way.  The judges scores will generate a matrix $J \in \mathbb{R}_+^{m \times n}$ with a row for each judge, while the competitors' scores will generate a matrix $C \in \mathbb{R}_+^{n \times m}$ with a row for each competitor's scores.  

Consider now the matrix $\hat C = C \otimes J$.  For $1 \leq p, q \leq n$, consider the entry $\hat c_{pq} = \max\limits_{1 \leq r \leq m} c_{pr}j_{rq}$.  Each product $c_{pr}j_{rq}$ can be viewed as an indirect score given by competitor $p$ to competitor $q$ via judge $r$.  Thus the entry $\hat c_{pq}$ is the maximal such score over all judges.  It is easy to see that the matrix $\hat C$ will be max-stochastic.  The maximal RST vector $w$ associated with $D(\hat C)$ can be used to rank the competitors and Theorem~\ref{thm:maxtreethm} shows that $w$ is a max eigenvector of $\hat C^T$.  Similar remarks apply to the matrix $\hat J = J \otimes C$.  

\section{Concluding Remarks}
\label{sec:conc}
We have shown that the Markov Chain Tree Theorem extends to the max algebra.  
We have also shown that this fact follows from the classical result via dequantisation.  We have related the maximal RST vector to the entries of the Kleene star and briefly discussed some possible applications of these results to AHP
and ranking. 

In an ongoing work, we are going to generalize the Markov Chain Tree Theorem to commutative semirings,
and consider the computational complexity of computing the RST vector in this general setting.  

\bigskip


\end{document}